\definecolor{change}{rgb}{0.4,0.4,0.4}
\newtheorem{theorem}{Theorem}
\newtheorem{assumption}{Assumption}
\DeclareMathOperator{\sat}{sat}
\DeclareMathOperator{\sgn}{sgn}
\begin{document}

\title{Maneuvering tracking algorithm for reentry vehicles with guaranteed prescribed performance}
%\title{Transient Performance Guaranteed for RLV Attitude Control With A Unified PPC Framework for Various Required Stability}

\author{ZONGYI GUO}
\member{Member, IEEE}

\author{XIYU GU}

\author{YONGLIN HAN}

\author{JIANGUO GUO}
\affil{Northwestern Polytechnical University Xi’an, China}

\author{THOMAS BERGER}
\affil{Institut für Mathematik, Universität Paderborn, Germany}

%\receiveddate{This paragraph of the first footnote will contain the date on which you submitted your paper for review, which is populated by IEEE. It is IEEE style to display support information, including sponsor and financial support acknowledgment, here and not in an acknowledgment section at the end of the article. For example, ``This work was supported in part by the U.S. Department of Commerce under Grant BS123456.'' }
%% \accepteddate{XXXXX XX XXXX}
%% \publisheddate{XXXXX XX XXXX}

%\corresp{The name of the corresponding author appears after the financial information, e.g. {\itshape (Corresponding author: M. Smith)}. Here you may also indicate if authors contributed equally or if there are co-first authors.}

\authoraddress{Z. Guo, X. Gu, Y. Han, and J. Guo are with the Institute of Precision Guidance, and Control, Northwestern Polytechnical University, Xi'an 710072,
	China, (e-mail:
	guozongyi@nwpu.edu.cn;
	\textcolor{black}{guxiyu@mail.nwpu.edu.cn;} hanyonglin@mail.nwpu.edu.cn;
	guojianguo@nwpu.edu.cn).
	T.\ Berger is with the Universität Paderborn, Institut für Mathematik, Warburger Str. 100, 33098 Paderborn, Germany, (thomas.berger@math.upb.de). (Corresponding author: Zongyi Guo.)}

\editor{This work was supported by the National Natural Science Foundation of China under Grant 92271109 and Grant 52272404 and the Deutsche Forschungsgemeinschaft (DFG, German Research Foundation) under Project-IDs 362536361 and 471539468.}
%\supplementary{Color versions of one or more of the figures in this article are available online at {http://ieeexplore.ieee.org}.}

\markboth{AUTHOR ET AL.}{SHORT ARTICLE TITLE}
\maketitle
\begin{abstract}
	This paper presents a prescribed performance-based tracking control strategy for the atmospheric reentry flight of space vehicles subject to rapid maneuvers during flight mission.
A time-triggered non-monotonic performance funnel is proposed with the aim of constraints violation avoidance in the case of sudden changes of the reference trajectory.
		Compared with traditional prescribed performance control methods, the novel funnel boundary is adaptive with respect to the reference path and is capable of achieving stability under disturbances.
		A recursive control structure is introduced which does not require any knowledge of specific system parameters. By a stability analysis we show that the tracking error evolves within the prescribed error margin under a condition which represents a trade-off between the reference signal and the performance funnel.  The effectiveness of the proposed control scheme is verified by simulations.
	
	\textit{Index Terms}--Reentry vehicles, tracking with prescribed performance, funnel control, maneuvering trajectory tracking
	\end{abstract}
\section{\textcolor{black}{INTRODUCTION}}

For the cost efficiency of space missions it is imperative that spacecrafts are able to return to earth through its atmosphere, following a prescribed trajectory. Such atmospheric reentry problems are a main focus of the aerospace industry and have received a large amount of attention during the last decade. Trajectory tracking strategies for reentry vehicles (RVs), characterized by high flight velocity, short response time and large envelope advantages, have long been considered as a hot research area due to its extensive applications in engineering \cite{Han Tuo 2021 TII}-\cite{Mehra 1971 ITAC}. Maneuvering  flight has stimulated extensive research in the areas of  evasion, pursuit and obstacle avoidance  for missions achievement \cite{Peng 2019 TCyber}-\cite{H.I. Yang 2004 CD}. Commonly, the main objective of maneuvering flight control  is stability and robustness of the system and to provide the stabilization capabilities in RV tracking either on-line  or for off-line planned reference trajectories. Some widespread control approaches, including PID \cite{P.E. Pounds 2012 Auton}, sliding mode control \cite{Elmokadem 2016 ND}, backstepping control \cite{Cho 2020 Ocean}, adaptive control \cite{Beikzadeh 2018 Franklin}-\cite{Z.Zuo 2014 Electron} and intelligent algorithms \cite{Mu 2016 TNNLS}-\cite{Peng 2017 TSMC}, are  the popular choice owing to their simplicity and effectiveness in RV tracking problems. However, apart from  the steady state characteristics, also the transient behavior must be taken into account for a successful mission. As RVs travel at very high velocities, the aforementioned  control methods, which only focus on the steady state behavior, are not applicable in this context. Although those results demonstrated that the tracking objective can be successfully accomplished, it remains an open issue how to guarantee its high speed convergence, minimum accuracy and small overshoot.

To resolve these drawbacks, control algorithms for constraining the transient performance are flourishing during the past few decades, and two different approaches have been developed. Prescribed performance control (PPC) has been proposed in \cite{Bechlioulis 2008 TAC}-\cite{Bechlioulis 2014 Auto} and is regarded as a representative nowadays. It relies on an error transformation, which is designed to transform the original output error restrictions into an equivalent interval one. Since its universal control structure, PPC has been thoroughly investigated in combination with unconstrained control methods like backstepping and sliding mode control. Funnel control (FC) is the second control mechanism for guaranteeing a prescribed performance of the tracking error
\cite{Ilchmann A 2002}-\cite{Berger 2022 auto} by introducing a time-varying high-gain feedback in the control law. If the error tends towards the funnel boundary, the gain increases so that the error is kept inside the performance funnel.
Both PPC and FC have already been investigated for the control algorithm design for RVs with a focus on the transient performance \cite{XiyuGu 2022 ISA}-\cite{Buxiangwei 2016 Franklin}.
Notice that almost all funnel boundaries selected in works on funnel control are monotonically decreasing functions, although the theory guarantees the stability of the closed-loop system for a vast variety of non-monotonic funnel boundaries. It turns out that for flight maneuvering, non-monotonic funnel boundaries are more suitable. Sudden flight maneuvers may lead to a drastic increase of the control effort or even drive the tracking error across the funnel boundary, resulting in closed-loop system instability.

%It is known that boundary highlights its feasibility for those who undertake many tasks and execute the demand of task mutation momentarily.
Considering the various demands of trajectory tracking for RVs during different phases, a control law which adapts itself to the prescribed boundary function is required to guarantee the transient  behavior in flight maneuvering.
Therefore, a time triggered non-monotonic funnel boundary is proposed in this paper. Using a priori information of the reference trajectory, we design a boundary function which is widened during critical phases, e.g.\ in the case of sudden course corrections. In this way, peaks in the control input signal are avoided. %So far, to the best of our knowledge, the non-monotonic boundary with performance guaranteed for trajectory tracking of RV has not been discussed, and it is still an open area and needs to be further addressed.
Additionally,  disturbances (such as noises, uncertainties or unmodeled dynamics) are taken into account, as they might have a detrimental effect on the system's performance. A significant challenge in stability analysis is imposed by the dynamics of the RV, where the flight altitude (which is the system output) is influenced by the deflection angle only in a saturated way (via a $\sin$ function). Therefore, arbitrary instantaneous changes of the altitude are not possible. Any prescribed trajectory can only be tracked up to a certain accuracy, i.e., there is a trade-off between the derivative of the reference and the funnel boundary function. To the best of our knowledge, this is the first work where such a trade-off is found for RV tracking problems with guaranteed prescribed performance.

Focusing on the issues mentioned above, the main contributions of this paper are summarized as follows.
\begin{itemize}
   \item A recursive funnel control structure with low complexity is adopted to avoid the requirement of a priori knowledge of system parameters.
	\item We design a  robust funnel control law with time triggered non-monotonic funnel boundary for flight maneuvering with guaranteed transient performance under disturbances.
	\item We derive a condition representing a trade-off between the reference trajectory and the funnel boundaries, under which the RV is amenable to the proposed funnel control law.	
\end{itemize}

The remainder of this paper is organized as follows. The dynamic model of the RV in yaw channel is presented in  Section \ref{section PROBLEM FORMULATION}, together with the proposed funnel boundary and the control objective.  In Section \ref{FUNNEL CONTROLLER DESIGN} we state the funnel-based control law design and provide the stability analysis.
Simulation results are given in Section \ref{section simulation} and Section \ref{section conclusion} finally concludes this article.

\section{PROBLEM FORMULATION}\label{section PROBLEM FORMULATION}
\subsection{RV Dynamics}
\textcolor{black}{Considering horizontal lateral maneuvers with constant velocity, the simplified model of a RV in yaw channel is established in \cite{Chai RQ Acta 2020,Guanjie Hu 2022 TII} and is of the form}
\begin{align}\label{dzh}
{{\dot z}_h}\left( t \right) &=  - V\sin \left( {{\psi _V}\left( t \right)} \right)+\Delta_0(t)\\\label{dpsi_V}
{{\dot \psi }_V}\left( t \right) &=  - \frac{1}{{mV}}Z\left( t \right)+\Delta_1(t)\\ \label{dpsi}
\dot \psi \left( t \right) &= {\omega _y}\left( t \right)+\Delta_2(t)\\ \label{dw}
{{\dot \omega }_y}\left( t \right) &= \frac{{{M_y}\left( t \right)}}{{{J_y}}}+\Delta_3(t)\\\label{beta}
\beta \left( t \right) &= \psi \left( t \right) - {\psi _V}\left( t \right)
\end{align}
where $z_h$ is the flight altitude, $m$ is the mass, $V$ is the flight speed, $\psi_V$, $\psi$, $\beta$ represent the deflection angle, yaw angle and sideslip angle, $\omega_y$ is the yaw rate, $J_y$ denotes the yaw rotational inertia and \textcolor{black}{$\Delta_i$ $(i=0,1,2,3)$ are bounded disturbances}. The functions $Z$ and $M_y$ are the aerodynamic force and moment in yaw channel, expressed by $Z(t) = \bar qS( {c_z^\alpha \alpha  + c_z^\beta \beta(t)   + c_z^0} )$, ${M_y}(t) = \bar qSl( {c_M^\alpha \alpha  + c_M^\beta \beta(t)  + c_M^{{\delta _y}}{\delta _y}(t) + c_M^0} )$, where $\alpha$ is the angle of attack, $\delta_y$ represents the rudder angle, $\bar q$ is the dynamic pressure, $S$ and $l$ are the reference area and aerodynamic chord of the RV, and $c_z^i$ and $c_M^j$ $(i=\alpha,\beta,0,j=\alpha,\beta,\delta_y,0)$ are the aerodynamic coefficients for force and moment, respectively. The rudder angle $\delta_y$ can be manipulated and serves as the control input.

We introduce the variables $y_0(t)=z_h(t)$, $y_1(t)=\psi_V(t)$, $y_2(t)=\beta(t)=\psi(t)-\psi_V(t)$, $y_3(t)=\omega_y(t)$ and the control input $u(t)=\delta_y(t)$ to rewrite the dynamic model~\eqref{dzh}--\eqref{beta} in the form
\begin{align}\label{dy0}
{{\dot y}_0}\left( t \right) &=  - V\sin \left( {{y_1}\left( t \right)} \right) + {\Delta _0}(t)\\\label{dy1}
{{\dot y}_1}\left( t \right) &=  - {c_1} - {c_2}{y_2}\left( t \right) + {\Delta _1}(t)\\
{{\dot y}_2}\left( t \right) &= {y_3}\left( t \right) + {c_1} + {c_2}{y_2}\left( t \right) + {\Delta _2}(t) - {\Delta _1}(t)\\\label{dy3}
{{\dot y}_3}\left( t \right) &= {c_3} + {c_4}{y_2}\left( t \right) + {c_5}u\left( t \right) + {\Delta _3}(t)
\end{align}
with the constants ${c_1} = \frac{1}{{mV}}\bar qS\left( {c_z^\alpha \alpha  + c_z^0} \right)$, ${c_2} = \frac{1}{{mV}}\bar qSc_z^\beta$, ${c_3} = \frac{1}{{{J_y}}}\bar qSl\left( {c_M^\alpha \alpha  + c_M^0} \right)$, ${c_4} = \frac{1}{{{J_y}}}\bar qSc_M^\beta$, ${c_5} = \frac{1}{{{J_y}}}\bar qSc_M^{{\delta _y}}$.

	It can be seen from the dynamics (\ref{dzh}) that the flight altitude $z_h$ is influenced only by the deflection angle $\psi_V$ and that this influence is saturated by the $\sin$ function. Therefore, it is clear that it is impossible to achieve tracking of arbitrary reference signals with arbitrary prescribed performance. There must be a trade-off between the reference signal (${{\dot z}_{h_{ref}}}$) and the funnel boundary. This trade-off is formulated as condition (\ref{inequality-step 1}) in Theorem~\ref{theorem}.

\begin{figure}[h]
	%		\begin{minipage}{8.0cm}
	\centering
	\includegraphics[width =0.9\columnwidth]{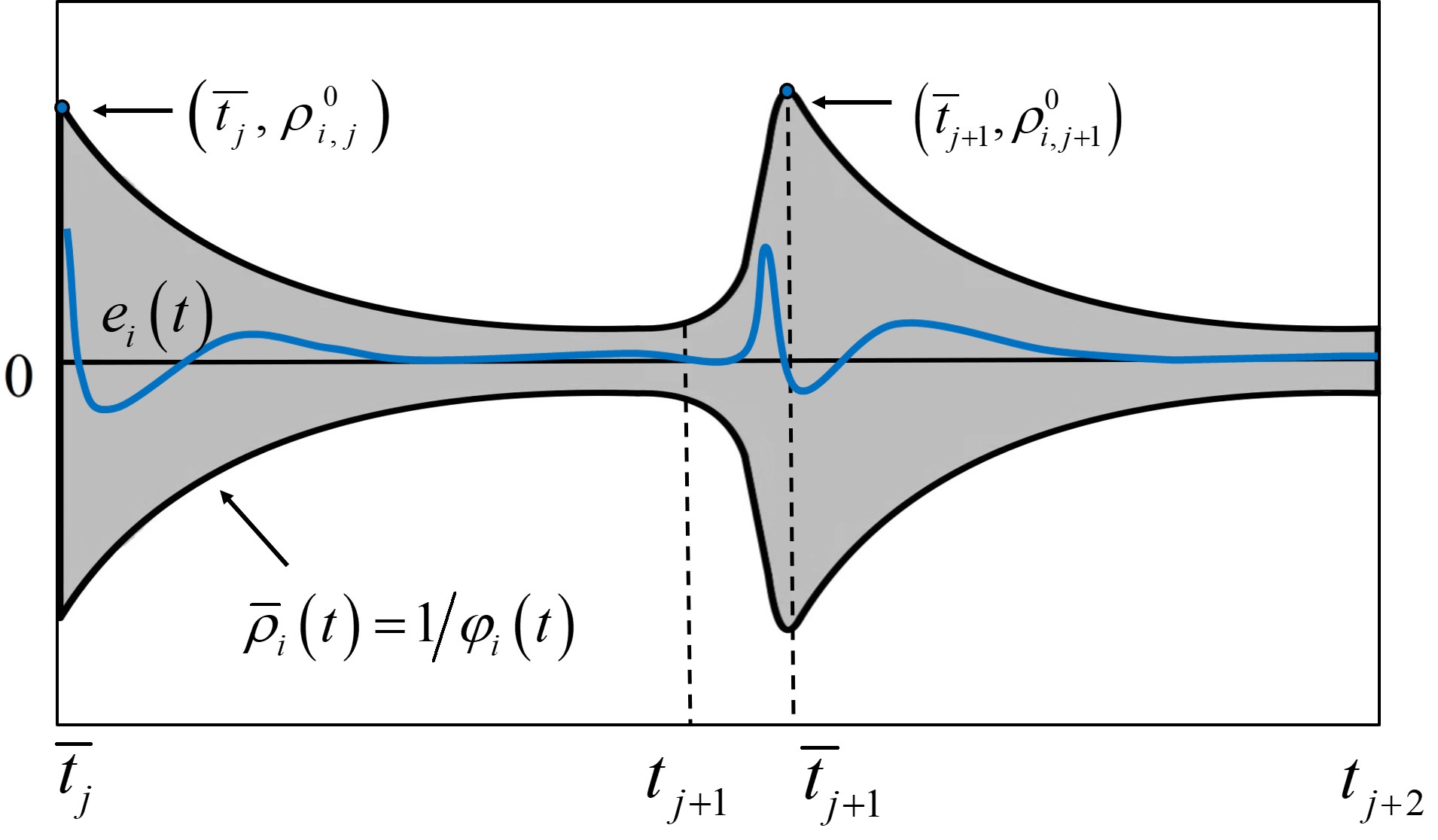}
	\caption{\textcolor{black}{Tracking error evolution in the funnel ${\Gamma_{\varphi_i}}(i=0,1,2,3)$.}}
	\label{fig_funnel}
	%	\end{minipage}
\end{figure}

\subsection{Funnel Boundary}
We define functions $\varphi _i$ $(i=0,1,2,3)$ as the reciprocal of the funnel boundary ${\bar{\rho}_i}\left( t \right)$, which describe the performance funnel ${\Gamma_{\varphi_i}}$ (cf.~\cite{Ilchmann A 2002}) as
\begin{equation}\label{funnel definition}
\begin{aligned}
{\Gamma _{\varphi_i}}: &= \{ \left( {t,{e_i}} \right) \in {\mathbb{R}_{ \ge 0}} \times \mathbb{R}\mid {\varphi_i}\left( t \right)|{e_i}| < 1\}.
\end{aligned}
\end{equation}
The \textcolor{black}{functions $\varphi_i$ are continuously differentiable, bounded with bounded derivatives,} and satisfy $\varphi _i(t)> 0$ for all $t\ge 0$ and $\liminf_{t\to\infty} \varphi _i(t)>0$.
Fig. \ref{fig_funnel} displays the reciprocal $\bar \rho_i(t)=1/\varphi_i(t)$.

In order to avoid peaks in the control input signal due to a strongly varying reference trajectory, we employ a non-monotonic funnel boundary defined as
\begin{align}\nonumber\label{novel funnel}
&{1 \mathord{\left/
		{\vphantom {1 {{\varphi _i}\left( t \right)}}} \right.
		\kern-\nulldelimiterspace} {{\varphi _i}\left( t \right)}} ={{\bar \rho }_i}\left( t \right)=\\
&\left\{ {\begin{array}{*{20}{c}}
	{{{\bar \rho }_{i,0}}\left( t \right)}&{0 \le t < {t_1}}\\
	\begin{array}{l}
	{a_{i,0}}{\left( {t - {t_1}} \right)^3} + {b_{i,0}}{\left( {t - {t_1}} \right)^2}\\
	\;\;\;\;\;\;\;\;\;\;\;\;+ {c_{i,0}}\left( {t - {t_1}} \right) + {d_{i,0}}
	\end{array}&{{t_1} \le t < {{\bar t}_1}}\\
	{{{\bar \rho }_{i,1}}\left( t \right)}&{{{\bar t}_1} \le t < {t_2}}\\
	\vdots & \vdots \\
	{\begin{array}{*{20}{l}}
		\begin{array}{l}
		{a_{i,j - 1}}{\left( {t - {t_j}} \right)^3} + {b_{i,j - 1}}{\left( {t - {t_j}} \right)^2}\\
		\;\;\;\;\;\;\;\;\;\;\;\; + {c_{i,j - 1}}\left( {t - {t_j}} \right) + {d_{i,j - 1}}
		\end{array}
		\end{array}}&{{t_j} \le t < {{\bar t}_j}}\\
	{{{\bar \rho }_{i,j}}\left( t \right)}&{{{\bar t}_j} \le t < {t_{j + 1}}}\\
	\vdots & \vdots \\
	\begin{array}{l}
	{a_{i,p - 1}}{\left( {t - {t_p}} \right)^3} + {b_{i,p - 1}}{\left( {t - {t_p}} \right)^2}\\
	\;\;\;\;\;\;\;\;\;\;\;\; + {c_{i,p - 1}}\left( {t - {t_p}} \right) + {d_{i,p - 1}}
	\end{array}&{{t_p} \le t < {{\bar t}_p}}\\
	{{{\bar \rho }_{i,p}}\left( t \right)}&{{{\bar t}_p} \le t } %< {t_{p + 1}}}
	\end{array}} \right.
\end{align}
where $t_{j}$ and $\bar t_j$ $(j=1,...,p)$ are the triggered time and initial time points of every phase after maneuvering, ${{\bar t}_j} - {t_j}$ is the time range for maneuvering and $p$ represents the number of triggered times. The polynomials in each interval of the form $[t_j,\bar t_j]$ are chosen based on the current maneuver encoded in the reference trajectory, and they ensure a widening of the funnel boundary. The functions ${{{\bar \rho }_{i,j}}\left( t \right)}\;(i=0,1,2,3,j=1,2,...,p)$ are of the form
\begin{align}\label{traditional boundary}
{{\bar \rho }_{i,j}}\left( t \right) = \left( {\rho _{i,j}^0 - \rho _{i,j}^\infty } \right){e^{ - {l_{i,j}}t}} + \rho _{i,j}^\infty
\end{align}
with initial funnel width ${{\rho^0 _{i,j}}}>0$, required minimum exponential convergence rate $l_{i,j}>0$ and the maximum steady state error $\rho _{i,j}^\infty>0$, respectively. In order to guarantee that $\bar \rho_i$ is continuously differentiable, the parameters $a_{i,j}, b_{i,j}, c_{i,j}, d_{i,j}$ are chosen such that $\mathop {\lim }\limits_{t \to t_j^ - } {{\bar \rho }_{i}}\left( t \right) = \mathop {\lim }\limits_{t \to t_j^ + } {{\bar \rho }_{i}}\left( t \right)$, $\mathop {\lim }\limits_{t \to \bar t_j^ - } {{\bar \rho }_{i}}\left( t \right) = \mathop {\lim }\limits_{t \to \bar t_j^ + } {{\bar \rho }_{i}}\left( t \right)$, $\mathop {\lim }\limits_{t \to t_j^ - } {{\dot {\bar \rho} }_{i}}\left( t \right) = \mathop {\lim }\limits_{t \to t_j^ + } {{\dot {\bar \rho} }_{i}}\left( t \right)$, $\mathop {\lim }\limits_{t \to \bar t_j^ - } {{\dot {\bar \rho} }_{i}}\left( t \right) = \mathop {\lim }\limits_{t \to \bar t_j^ + } {{\dot {\bar \rho} }_{i}}\left( t \right)$ for $j=1,\ldots,p$.

The proposed funnel boundary is displayed in Fig.~\ref{fig_funnel} and we stress that it is different from the monotonically decreasing boundary functions of the form (\ref{traditional boundary}) widely used in \cite{Bechlioulis 2014 Auto}, \cite{Berger 2018 auto}, \cite{Berger 2022 auto}.
Instead, it is a time triggered mechanism with trigger time points $t_j$, chosen in accordance with the reference trajectory, so that the proposed funnel (\ref{novel funnel}) adapts itself and is suitable for the time-varying maneuvering command of RVs. For $t>t_p$ the novel funnel boundary converges to a neighbourhood of the origin, satisfying $\mathop {\lim }\limits_{t \to \infty } {{\bar \rho }_{i}}\left( t \right) = \rho _{i,p}^\infty  > 0$.

\subsection{\textcolor{black}{Control Objective}}
The control objective is to design an output derivative feedback such that for any  sufficiently smooth reference trajectory ${z_{h_{ref}}}$, any initial values and under the influence of disturbances, the tracking error $z_h - z_{h_{ref}}$ evolves within a prescribed performance funnel $\Gamma_{\varphi_0}$ as in (\ref{funnel definition}) and hence exhibits the desired transient and steady behavior. Furthermore, all signals
$u$,  $z_h$, $\psi_V$, $\psi$, $\beta$ and $\omega_y$ in the closed-loop system should remain bounded.

\section{FUNNEL CONTROLLER DESIGN}\label{FUNNEL CONTROLLER DESIGN}

\subsection{\textcolor{black}{Funnel-based control law design}}

Before we define the control law we introduce the following assumptions.
\begin{assumption}\label{assumption_referene}
The reference trajectory ${z_{h_{ref}}}$ is known, it is four times continuously differentiable and its first four derivatives are bounded.
\end{assumption}
\begin{assumption}\label{assumption_disturbance}
	\textcolor{black}{The disturbances $\Delta_i$ are  measurable and essentially bounded with $\|\Delta_i\|_\infty \le D_i$ for known constants $D_i\ge0$ $(i=0,1,2,3)$.}
\end{assumption}

\textcolor{black}{Assumptions \ref{assumption_referene} and \ref{assumption_disturbance} are reasonable and frequently used in the literature.
The disturbances $\Delta_i$ $(i =0, 1, 2,3) $ involved in (\ref{dzh})-(\ref{beta}) account for uncertainties in the aerodynamic coefficients, noises and external disturbances, which are usually bounded throughout the flight process.}

We define the tracking error as
\[
    e_0(t)=y_0(t)-{z_{h_{ref}}(t)}=z_h(t)-{z_{h_{ref}}(t)}
\]
and introduce the following recursive structure
\begin{equation}
\begin{aligned}\label{e_i}
{e_i}\left( t \right) = {y_i}\left( t \right) - z_{{h_{ref}}}^{\left( i \right)}\left( t \right) + {k_{i-1}}\left( t \right){\varpi _{i-1}}\left( t \right),\ i=1,2,3
\end{aligned}
\end{equation}
where ${k_i}\left( t \right) = \frac{1}{{1 - \varpi _i^2\left( t \right)}}$ and ${\varpi _i}\left( t \right) = {\varphi _i}\left( t \right){e_i}\left( t \right)$ for $\varphi_i$ as in~\eqref{novel funnel}.

Then the funnel-based control law is given by
\begin{align}\label{novel funnel controller}
u\left( t \right) =  - k_3\left( t \right){{e_3}\left( t \right)} =  - \frac{{{e_3}\left( t \right)}}{{1 - \varphi _3^2\left( t \right)e_3^2\left( t \right)}}
\end{align}
and the block diagram of the proposed control scheme is shown in Fig.~\ref{liuchengtu}.
\begin{figure}
	\centering
	\includegraphics[width =1\columnwidth]{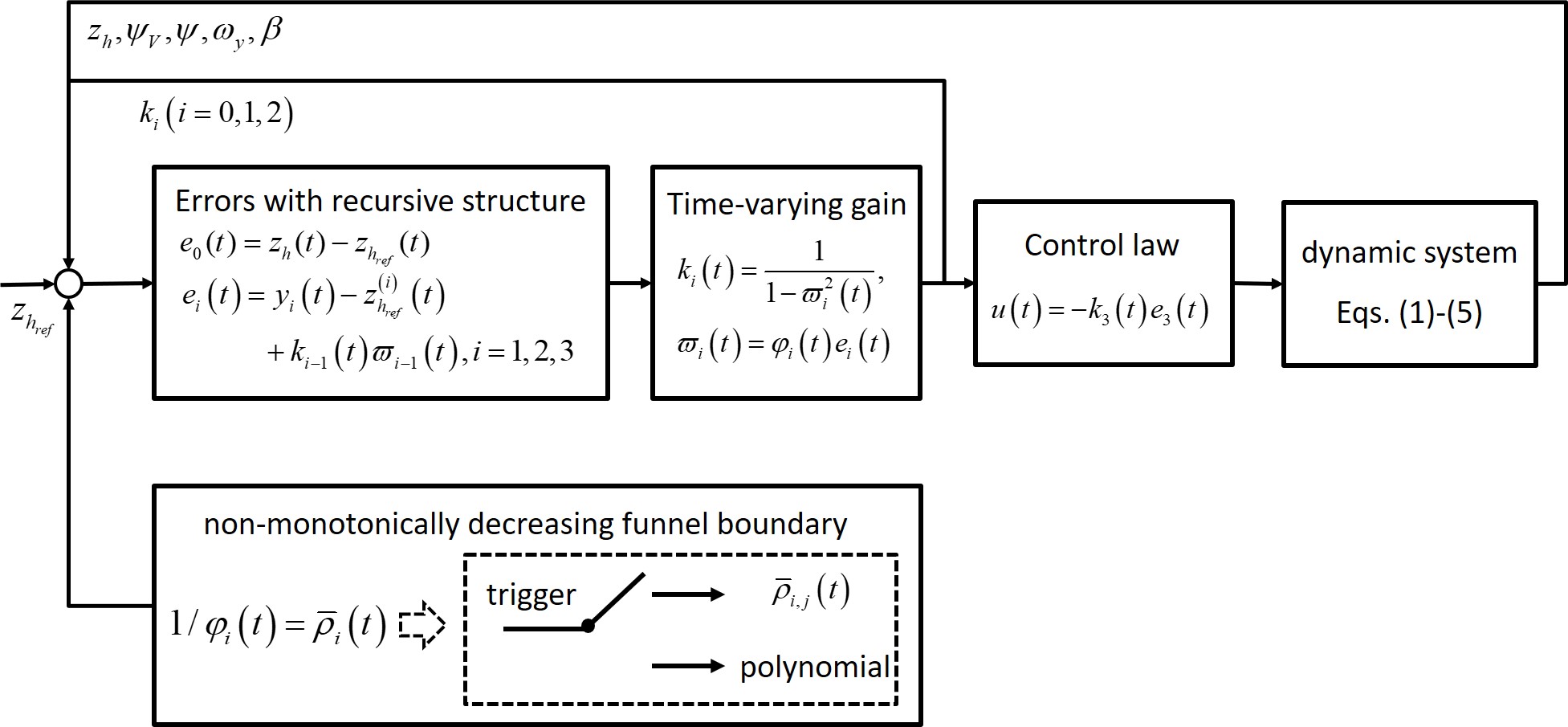}
	\caption{\textcolor{black}{Funnel control structure for RV tracking issue.}}
	\label{liuchengtu}
	%	\end{minipage}
\end{figure}
In the sequel we investigate existence of solutions of the initial value problem resulting from the application of the funnel controller~\eqref{novel funnel controller} to the RV with dynamics~(\ref{dzh})-(\ref{beta}). By a solution of~(\ref{dzh})-(\ref{beta}),~\eqref{novel funnel controller} we mean a function $(z_h,\psi_V,\psi,\omega_y):[0,t_f)\to\mathbb{R}^4$, $t_f\in(0,\infty]$,
which is locally absolutely continuous and satisfies the initial conditions as well as the differential equations~(\ref{dzh})-(\ref{beta}) for almost all $t\in [0,t_f)$.  A solution is called maximal, if it has no right extension that is also a solution.

\subsection{Stability Analysis}\label{Stability Analysis}
In this part,
we present the stability analysis of the proposed control law.
\begin{theorem}\label{theorem}
    Consider a RV with dynamics~(\ref{dzh})-(\ref{beta}), satisfying Assumptions \ref{assumption_referene}-\ref{assumption_disturbance}, under the funnel control law~(\ref{novel funnel controller}). Choose funnel boundaries $\varphi_i$ $(i=0,1,2,3)$ as in~\eqref{novel funnel} such that the initial values satisfy
    \[
        \varphi_i(0) |e_{i}(0)|<1\quad (i=0,1,2,3).
    \]
    Additionally, assume that the functions $\varphi_0, \varphi_1$ and $z_{h_{ref}}$ satisfy the following condition:
    \begin{equation}\label{inequality-step 1}
    	\begin{aligned}
    		&\exists\, \mu\in (0,1)\ \forall\, t\ge 0:\\
    		&\;\;\;\;\frac{{|{{\dot \varphi }_0}(t)|}}{{V\varphi _0^2(t)}} + \frac{1}{{{\varphi _1}(t)}} + \frac{{{D_0}}}{V} + \frac{{(1 + V)}}{V}|{{\dot z}_{{h_{ref}}}}(t)| \le \mu.
    	\end{aligned}
    \end{equation}
Then the funnel controller~(\ref{novel funnel controller}) applied to~(\ref{dzh})-(\ref{beta}) yields an
initial-value problem which has a solution, every solution can be maximally extended and every maximal solution
$(z_h,\psi_V,\psi,\omega_y):[0,t_f)\to\mathbb{R}^4$, $t_f\in(0,\infty]$ has the following properties:
	\begin{itemize}
        \item global existence: $t_f = \infty$;
		\item all errors evolve uniformly in the respective prescribed performance funnels, that is for all $i=0,1,2,3$ there exists ${\varepsilon _i} \in (0,1)$ such that for all $t\ge 0$ we have $|{\varpi _i}\left( t \right)| \le {\varepsilon _i}$.
		\item all signals $z_{h}$, $\psi_V$, $\psi$, $\omega_y$, $\delta_y$ and $k_i$ $(i=0,1,2,3)$ in the closed-loop system are bounded.
	\end{itemize}
\end{theorem}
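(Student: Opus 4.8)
My plan is to recast the closed loop in the error coordinates $\varpi_i=\varphi_i e_i$, $i=0,1,2,3$, with $e_i$ as in~\eqref{e_i} and $u$ as in~\eqref{novel funnel controller}. On the open ``funnel'' set $\mathcal{D}=\{(t,e)\in\mathbb{R}_{\ge0}\times\mathbb{R}^4\mid |\varphi_i(t)e_i|<1,\ i=0,\dots,3\}$ the substitution of~\eqref{novel funnel controller} into~\eqref{dy0}--\eqref{dy3} produces an initial value problem whose right-hand side is measurable in $t$ (by Assumption~\ref{assumption_disturbance} the disturbances are measurable and essentially bounded, by Assumption~\ref{assumption_referene} the reference derivatives are bounded) and smooth in the state, since each $k_i=1/(1-\varpi_i^2)$ is smooth where $|\varpi_i|<1$. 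As the hypothesis $\varphi_i(0)|e_i(0)|<1$ places the initial value in $\mathcal{D}$, a standard Carath\'eodory existence theorem yields a maximal absolutely continuous solution on some $[0,t_f)$. I would then invoke the boundary-hitting property of maximal solutions: if $t_f<\infty$, the graph of the solution must leave every compact subset of $\mathcal{D}$, and because the $\varphi_i$ are bounded, have bounded derivatives and are bounded away from $0$, this forces $\limsup_{t\to t_f}\max_i|\varpi_i(t)|=1$. The entire theorem therefore reduces to producing uniform bounds $|\varpi_i(t)|\le\varepsilon_i<1$ on $[0,t_f)$.

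The heart of the argument, and the step I expect to be the main obstacle, is the output level $i=0$, where the usual high-gain funnel mechanism is unavailable because $z_h$ is actuated only through the saturated term $-V\sin(y_1)$ in~\eqref{dy0}. Differentiating $\varpi_0$ gives $\dot\varpi_0=(\dot\varphi_0/\varphi_0)\varpi_0-V\varphi_0\sin(y_1)+\varphi_0(\Delta_0-\dot z_{h_{ref}})$. I would substitute $y_1=e_1+\dot z_{h_{ref}}-k_0\varpi_0$, use the automatic bound $|e_1|<1/\varphi_1$ valid on $\mathcal{D}$ together with $|\sin a-\sin b|\le|a-b|$ to control the mismatch in the saturated term, and normalise the resulting estimate by dividing through by $V\varphi_0$. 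The four terms of the trade-off condition~\eqref{inequality-step 1} then emerge exactly as the funnel-shape contribution $|\dot\varphi_0|/(V\varphi_0^2)$, the virtual-control slack $1/\varphi_1$, the disturbance budget $D_0/V$, and the reference term $(1+V)|\dot z_{h_{ref}}|/V$. Since~\eqref{inequality-step 1} caps their sum by $\mu<1$, the normalised demand on $\sin(y_1)$ never exceeds $\mu$, which simultaneously keeps $y_1$ inside the monotone principal branch of $\sin$ (ruling out the oscillatory pathology that would otherwise destroy the feedback) and yields a differential inequality forcing $\tfrac{d}{dt}\varpi_0^2<0$ once $|\varpi_0|$ exceeds a threshold $\varepsilon_0=\varepsilon_0(\mu)<1$. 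This confines $\varpi_0$ and is precisely the announced reference/funnel trade-off.

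For $i=1,2,3$ I would close the remaining levels recursively, now by the standard high-gain mechanism: at each of these levels the next state enters affinely (through $-c_2y_2$, $y_3$, and $c_5u$, respectively), so inserting $y_{i+1}=e_{i+1}+z_{h_{ref}}^{(i+1)}-k_i\varpi_i$, or the control $u=-k_3e_3$ at the top, generates a genuine high-gain term of the form $c\,k_i\varpi_i^2$ in $\tfrac{d}{dt}\varpi_i^2$ whose sign is stabilising for the control-effectiveness signs of the RV, and which tends to $-\infty$ as $|\varpi_i|\to1$. This term dominates the remainder provided the remainder is bounded; the remainder contains $e_{i+1}$ (bounded by $1/\varphi_{i+1}$ on $\mathcal{D}$) and $\tfrac{d}{dt}(k_{i-1}\varpi_{i-1})$. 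Here the bottom-up ordering is essential and resolves the apparent circularity: once $|\varpi_{i-1}|\le\varepsilon_{i-1}<1$ is known, $k_{i-1}$, $\dot k_{i-1}$ and $\dot\varpi_{i-1}$ are all bounded, hence so is $\tfrac{d}{dt}(k_{i-1}\varpi_{i-1})$, which feeds the level-$i$ estimate and yields $|\varpi_i|\le\varepsilon_i<1$.

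It remains to assemble the conclusions. The uniform bounds $|\varpi_i(t)|\le\varepsilon_i<1$ on $[0,t_f)$ contradict the boundary-hitting alternative, so $t_f=\infty$ and the errors evolve uniformly within their funnels for all $t\ge0$, giving the first two assertions. Boundedness of the physical signals then follows by back-substitution: $e_i$ and $k_i$ are bounded, hence $y_i=e_i+z_{h_{ref}}^{(i)}-k_{i-1}\varpi_{i-1}$ is bounded, so $z_h,\psi_V,\beta,\omega_y$ and $\psi=\beta+\psi_V$ are bounded, and $u=-k_3e_3$ is bounded as well. I expect all of the genuine difficulty to sit in the level-$0$ estimate with the $\sin$ saturation; the higher levels and the final bookkeeping are careful but routine.
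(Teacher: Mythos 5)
Your overall architecture (Carath\'eodory existence on the open funnel domain, recursive bounds $|\varpi_i|\le\varepsilon_i<1$, boundary-hitting argument for $t_f=\infty$, back-substitution for boundedness) coincides with the paper's Steps 1--6. But the step you yourself single out as the heart of the proof --- level $i=0$ --- is handled by a mechanism that fails. You claim that condition \eqref{inequality-step 1} ``keeps $y_1$ inside the monotone principal branch of $\sin$'' and hence yields $\tfrac{d}{dt}\varpi_0^2<0$ once $|\varpi_0|$ exceeds some threshold $\varepsilon_0(\mu)$. Neither claim is true. Condition \eqref{inequality-step 1} constrains only the design data $\varphi_0,\varphi_1,D_0,\dot z_{h_{ref}}$, not the state, while $y_1=e_1+\dot z_{h_{ref}}-k_0(t)\varpi_0(t)$ contains the term $k_0\varpi_0=\varpi_0/(1-\varpi_0^2)$, which is unbounded as $|\varpi_0|\to1$; asserting that $y_1$ stays in the principal branch presupposes exactly the bound on $\varpi_0$ (equivalently on $k_0$) that is to be proved. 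Consequently, after the Lipschitz/mean-value splitting, the feedback contribution to $\tfrac{d}{dt}\tfrac{1}{2}\varpi_0^2$ is $V\varphi_0(t)\,N(\varpi_0(t))$ with $N(s)=s\sin\bigl(\tfrac{s}{1-s^2}\bigr)$, and $N$ changes sign infinitely often on every interval $(\varepsilon,1)$: it equals $+s$ and $-s$ at points accumulating at $1$, and is even positive for small $|s|$. Hence there is no threshold above which $\tfrac{d}{dt}\varpi_0^2<0$; the monotone differential inequality your argument rests on does not exist.

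The paper closes exactly this hole with a different idea: it does not try to make the feedback sign-definite, but exploits the oscillation. Since $N$ attains the value $-s$ at points $s$ accumulating at $1$, one can pick a \emph{specific} level $\varepsilon_0\in(\max\{\mu,|\varpi_0(0)|\},1)$ with $N(\varepsilon_0)<-\mu$, and by continuity and evenness of $N$ there is $\eta>0$ with $N\le-\mu$ on $\eta$-neighbourhoods of both $+\varepsilon_0$ and $-\varepsilon_0$. A first-crossing argument then shows $|\varpi_0|$ can never pass this level: at the first time it reaches $\varepsilon_0$, the solution remains $\eta$-close to $\pm\varepsilon_0$ for a short while, during which $\tfrac{d}{dt}\tfrac{1}{2}\varpi_0^2\le V\varphi_0(N(\varpi_0)+\mu)\le0$, contradicting the assumed growth. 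Invariance thus comes from a barrier at a carefully selected level, not from monotonicity --- this is the key idea missing from your proposal. A second, smaller problem: at level $i=1$ your substitution $y_2=e_2+\ddot z_{h_{ref}}-k_1\varpi_1$ into the coupling $-c_2y_2$ produces the term $+c_2\varphi_1k_1\varpi_1^2$ in $\tfrac{d}{dt}\tfrac{1}{2}\varpi_1^2$, which is \emph{destabilising} whenever $c_2=\bar qSc_z^\beta/(mV)>0$ --- precisely the sign of the paper's own parameter table --- and the theorem contains no sign assumption on $c_2$; the paper instead keeps the $-k_1\varpi_1$ term generated by the definition \eqref{e_i} of $e_2$ and absorbs $(1+c_2)y_2$ into the bounded remainder. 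Your treatment of levels $2$ and $3$ and the concluding bookkeeping do match the paper.
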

\begin{proof}
Before the analysis, we record that it follows from \eqref{dy0}-\eqref{dy3} that the derivatives of $e_i(t)$ $(i=0,1,2,3)$ can be expressed as
\begin{align}\label{de0}
	{{\dot e}_0}\left( t \right) &=  - V\sin \left( {{y_1}\left( t \right)} \right) - {{\dot z}_{{h_{ref}}}}\left( t \right)+ {\Delta _0}(t) \\\nonumber\label{de1}
	{{\dot e}_1}\left( t \right) &= {e_2}\left( t \right) - {k_1}\left( t \right){\varpi _1}\left( t \right) + \frac{d}{{dt}}\left( {{k_0}\left( t \right){\varpi _0}\left( t \right)} \right)\\
	& \;\;\;\;-(1+c_2) {y_2}\left( t \right) - {c_1} + {\Delta _1}(t)\\\nonumber\label{de2}
	{{\dot e}_2}\left( t \right) &= {e_3}\left( t \right) - {k_2}\left( t \right){\varpi _2}\left( t \right) + \frac{d}{{dt}}\left( {{k_1}\left( t \right){\varpi _1}\left( t \right)} \right)\\
	&\;\;\;\; + {c_1} + {c_2}{y_2}\left( t \right) + {\Delta _2}(t) - {\Delta _1}(t)\\\label{de3}\nonumber
	{{\dot e}_3}\left( t \right) &= {c_5}u\left( t \right) - z_{{h_{ref}}}^{\left( 4 \right)}\left( t \right) + \frac{d}{{dt}}\left( {{k_2}\left( t \right){\varpi _2}\left( t \right)} \right)\\
	&\;\;\;\;+ {c_3} + {c_4}{y_2}\left( t \right) + {\Delta _3}(t)
\end{align}
In the following, we will first show that a local solution exists on $[0,t_f)$ and the tracking error evolves uniformly within the prescribed performance funnel, and we show $t_f=\infty$ in the last step.

\textcolor{black}{
\textbf{\textit{Step 1:}} To show existence of a solution of the closed-loop system, consider the functions
\begin{equation}
	\begin{aligned}
		{{\tilde e}_0}:{D_0} \to \mathbb{R},\ \left( {t,{y_0}} \right) \mapsto {y_0} - {z_{h_{ref}}}(t)
	\end{aligned}
\end{equation}
with the set ${D_0}: = {\mathbb{R}_{ \ge 0}} \times\mathbb{R}$ and
\begin{equation}
	\begin{aligned}
		&{{\tilde e}_i}:{D_i} \to \mathbb{R},\ \left( {t,{y_0}, \ldots ,{y_i}} \right) \mapsto {y_i} - z_{h_{ref}}^{(i)}(t) \\
		&\;\;\;\;\;\;\;\;\;\;\;\;\;+ \frac{{{\varphi _{i - 1}}(t){{\tilde e}_{i - 1}}(t,{y_0}, \ldots ,{y_{i - 1}})}}{{1 - \varphi _{i - 1}^2(t)\tilde e_{i - 1}^2(t,{y_0}, \ldots ,{y_{i - 1}})}}
	\end{aligned}
\end{equation}
with the sets
\begin{equation}
	\begin{aligned}
		&{D_i}: = \left\{ {\left( {t,{y_0}, \ldots ,{y_i}} \right) \in {D_{i - 1}} \times \mathbb{R}\mid} \right.\\
		&\;\;\;\;\;{\varphi _{i - 1}}(t)|{{\tilde e}_{i - 1}}\left( {t,{y_0}, \ldots ,{y_i}} \right)| < 1\} ,\ \left( {i = 1,2,3} \right),\\
		&{D_4}: = \left\{ {\left( {t,{y_0}, \ldots ,{y_3}} \right) \in {D_3}\mid} \right.\\
		&\;\;\;\;\;{\varphi _3}(t)|{{\tilde e}_3}\left( {t,{y_0}, \ldots ,{y_3}} \right)| < 1\}.
	\end{aligned}
\end{equation}
Introducing $Y(t) = (y_0(t), \ldots, y_3(t))^\top$ and the function
\begin{equation}
	\begin{aligned}
		&F:{D_4} \to {\mathbb{R}^4},\;(t,{y_0}, \ldots ,{y_3})\\
		&\mapsto \left( {\begin{array}{*{20}{c}}
				{ - V\sin ({y_1}) + {\Delta _0}}(t)\\
				{ - {c_1} - {c_2}{y_2} + {\Delta _1}}(t)\\
				{{y_3} + {c_1} + {c_2}{y_2} + {\Delta _2}(t) - {\Delta _1}}(t)\\
				{{c_3} + {c_4}{y_2} + {\Delta _3}(t) - {c_5}\frac{{{\varphi _3}(t){{\tilde e}_3}(t,{y_0}, \ldots ,{y_3})}}{{1 - \varphi _3^2(t)\tilde e{{_3^2}}(t,{y_0}, \ldots ,{y_3})}}}
		\end{array}} \right)
	\end{aligned}
\end{equation}
the closed-loop system takes the form
\begin{equation}
	\dot Y(t) = F(t,Y(t)),\quad Y(0) = (y_0^0, \ldots, y_3^0)^\top.
\end{equation}
Since $(0,Y(0))\in D_4$ and $F$ is measurable in~$t$, continuous in $(y_0,\ldots,y_3)$ and locally essentially bounded, an application of Theorem~B.1 from \cite{Ilchmann A 2009 International Journal of Control} yields the existence of a solution  and every solution can be extended to a maximal solution $Y:[0,t_f)\to\mathbb{R}^4$ with $t_f\in(0,\infty]$.} Furthermore, the graph of $Y$ is not a compact subset of~$D_4$.

\textbf{\textit{Step 2:}} We show that $k_0$ is bounded  on $[0,t_f)$. According to the definition of $\varpi_0(t)$ and (\ref{de0}) we have
\begin{equation}
\begin{aligned}\label{domega0}
{{\dot \varpi }_0}\left( t \right) &= \frac{{{{\dot \varphi }_0}\left( t \right)}}{{{\varphi _0}\left( t \right)}}{\varpi _0}\left( t \right) - {\varphi _0}\left( t \right)V\sin \left( {{y_1}\left( t \right)} \right)\\
& + {\varphi _0}\left( t \right)\left( {{\Delta _0}(t) - {{\dot z}_{{h_{ref}}}}\left( t \right)} \right).
\end{aligned}
\end{equation}
By the mean value theorem, for each $t\in [0,t_f)$, there exists $\xi \left( t \right)$ between $- {k_0}\left( t \right){\varpi _0}\left( t \right)$ and $- {k_0}\left( t \right){\varpi _0}\left( t \right) + {e_1}\left( t \right) + {{\dot z}_{{h_{ref}}}}\left( t \right)$ such that
\begin{equation}
\begin{aligned}\label{sin(y1)}
\sin \left( {{y_1}\left( t \right)} \right)&= \sin \left( { - {k_0}\left( t \right){\varpi _0}\left( t \right) + {e_1}\left( t \right) + {{\dot z}_{{h_{ref}}}}\left( t \right)} \right)\\
&= \sin \left( { - {k_0}\left( t \right){\varpi _0}\left( t \right)} \right) \\
&+ \left( {{e_1}\left( t \right) + {{\dot z}_{{h_{ref}}}}\left( t \right)} \right)\cos \left( {\xi \left( t \right)} \right).
\end{aligned}
\end{equation}
Now define ${U_0}(t) = \frac{1}{2}\varpi _0^2\left( t \right)$, then from (\ref{inequality-step 1}), (\ref{domega0}) and (\ref{sin(y1)}), and invoking $|{{\varpi _0}\left( t \right)}|<1$ and $|e_1(t)|<1/\varphi_1(t)$, we find that
\begin{equation}
\begin{aligned}
&{\dot U_0}(t) =  {\varpi _0}\left( t \right){{\dot \varpi }_0}\left( t \right)\\
&= \frac{{{{\dot \varphi }_0}(t)}}{{{\varphi _0}(t)}}{\varpi _0^2}{\left( t \right)} + {\varphi _0}\left( t \right){\varpi _0}\left( t \right)( - V\sin ( - {k_0}(t){\varpi _0}\left( t \right))\\
&\;\;\;\;- V({e_1}(t) + {{\dot z}_{{h_{ref}}}}(t))\cos (\xi (t)) + {\Delta _0}(t) - {{\dot z}_{{h_{ref}}}}(t))\\
&\le  - V{\varphi _0}(t)\sin \left( { - {k_0}(t){\varpi _0}\left( t \right)} \right){\varpi _0}\left( t \right) + \frac{{|{{\dot \varphi }_0}(t)|}}{{{\varphi _0}(t)}}\\
&\;\;\;\;+ V{\varphi _0}(t)\left( {\frac{1}{{{\varphi _1}(t)}} + \frac{{{D_0}}}{V} + \frac{{(1 + V)}}{V}|{{\dot z}_{{h_{ref}}}}(t)|} \right)\\
&\le V{\varphi _0}(t)\left( -N(-{\varpi _0}\left( t \right)) + \mu \right)
\end{aligned}
\end{equation}
where $N:(-1,1)\to(-1,1),\ s\mapsto \sin\left(\tfrac{s}{1-s^2}\right) s$. The function $N$ is symmetric and satisfies $N(-s) = N(s)$.

Choose $\varepsilon_0\in (0,1)$ such that $|\varpi_0(0)|<\varepsilon_0$ and $N(\varepsilon_0) < -\mu$. In the following, we show that $|\varpi_0(t)| \le \varepsilon_0$ for all $t\in [0,t_f)$. Assume there exists some $t\in [0,t_f)$ with $|\varpi_0(t)|>\varepsilon_0$ and define
\[
    {{\bar t}_0}: = \inf \left\{ {t \in [0,\textcolor{black}{t_f} )\mid \left| {{\varpi _0}(t)| > \varepsilon_0 } \right.} \right\} > 0.
\]
Since $N$ is continuous there exists $\eta>0$ such that $N(s) \le -\mu$ for all $s\in \mathbb{R}$ with $|s-\varepsilon_0| \le \eta$.
By symmetry of $N$ we also have $N(s) \le -\mu$ for all $s\in \mathbb{R}$ with $|s+\varepsilon_0| \le \eta$. Since $\varpi_0$ is continuous with $|\varpi_0(\bar t_0)| = \varepsilon_0$, there exists $\bar t_1\in (\bar t_0,t_f)$ such that $|\varpi_0(t)| > \varepsilon_0$  and $|\varpi_0(\bar t_0) - \varpi_0(t)| < \eta$ for all $t\in (\bar t_0,\bar t_1]$.

Let $\sigma = \mathop{\rm sgn} \varpi_0(\bar t_0)$, then $\varpi_0(\bar t_0)= \sigma |\varpi_0(\bar t_0)| = \sigma \varepsilon_0$ and hence $|\varpi_0(t) - \varpi_0(\bar t_0)| = |\varpi_0(t) - \sigma \varepsilon_0| \le \eta$ for all $t\in [\bar t_0,\bar t_1]$.  It follows that $N(\varpi_0(t))\le -\mu$ for all $t\in [\bar t_0,\bar t_1]$, and hence ${\dot U_0}(t) \le V\varphi_0(t) \big( \textcolor{black}{N(\varpi_0(t))} + \mu\big) \le 0$,  which upon integration gives $\varepsilon_0^2 = \varpi_0(\bar t_0)^2 \ge \varpi_0(\bar t_1)^2 > \varepsilon_0^2$, a contradiction. Hence,  $|\varpi_0(t)| \le \varepsilon_0$ for all $t\in [0,t_f)$ and thus $k_0$ is bounded \textcolor{black}{on} $[0,t_f)$.

\textbf{\textit{Step 3:}}  We show that $k_1$ is bounded  on $[0,t_f)$. A standard procedure in funnel control is used by seeking a contradiction.
For some $\varepsilon_1\in (0,1)$, which we will determine later, suppose that  there exists  $t_1^*\in [0,t_f)$ such that ${\varpi _1}\left(t_1^* \right) > \varepsilon_1$ and define
\[
    t_0^* := \max\{t\in [0,t_1^*) \mid {\varpi _1}\left(t \right) = \varepsilon_1\}.
\]
Then we find that
\begin{align}\label{er/2}
\forall\, t \in \left[ {t_0^*,t_1^*}\right]:\ {\varpi _1}\left( t \right) \ge {\varepsilon _1}
\end{align}
and hence
\begin{equation}\label{k1>}
\begin{aligned}
\forall\, t \in \left[ {t_0^*,t_1^*} \right]:\ {k_1}\left( t \right) = \frac{1}{{1 - \varpi _1^2\left( t \right)}} \ge \frac{1}{{1 - \varepsilon _1^2}}.
\end{aligned}
\end{equation}
Define ${U_1}(t) = \frac{1}{2}\varpi _1^2\left( t \right)$, then according to \eqref{de1} we have
\begin{equation}
\begin{aligned}\label{dU1}
&{{\dot U}_1}(t)= {\varpi _1}\left( t \right){{\dot \varpi }_1}\left( t \right)\\
&= {\varpi _1}\left( t \right)\left( {{{\dot \varphi }_1}\left( t \right){e_1}\left( t \right) + {\varphi _1}\left( t \right){{\dot e}_1}\left( t \right)} \right)\\
%&= \frac{{{{\dot \varphi }_1}(t)}}{{{\varphi _1}(t)}}{\varpi _1}{(t)^2} + {\varphi _1}(t){\varpi _1}(t)\left( {{{\dot y}_1}(t) + {e_2}(t) - {y_2}(t)} \right)\\
%&\;\;\;\;+ {\varphi _1}(t){\varpi _1}(t)\left( { - {k_1}(t){\varpi _1}(t) + \frac{d}{{dt}}\left( {{k_0}(t){\varpi _0}(t)} \right)} \right)\\
&= \frac{{{{\dot \varphi }_1}(t)}}{{{\varphi _1}(t)}}{\varpi _1^2}{(t)} - {\varphi _1}(t){k_1}(t){\varpi _1^2}{(t)} - {c_1}{\varphi _1}(t){\varpi _1}(t)\\
&\;\;\;\;+ {\varphi _1}(t){\varpi _1}(t)\left( { - {c_2}{y_2}(t)+{\Delta _1}(t)+ {e_2}(t) - {y_2}(t)} \right)\\
&\textcolor{black}{\;\;\;\;+ {\varphi _1}(t){\varpi _1}(t)\left( {1 + 2{\varpi _0^2}{{(t)}}{k_0}\left( t \right)} \right){k_0}(t){{\dot \varpi }_0}(t)}.
\end{aligned}
\end{equation}
From Step 2 we have that $k_0$ is bounded on $t\in [0,t_f)$. Furthermore, $\dot \varpi_0$ is bounded by \eqref{domega0}, $|e_2(t)|<1/\varphi_2(t)$ and $y_2, \Delta_1, \varphi_1, \dot \varphi_1$ are clearly bounded as well, \textcolor{black}{hence} there exists an upper bound $C_1>0$ such that for all
$t\in [0,t_f)$ we have
\begin{equation}
\begin{aligned}\label{c1}
&\ \frac{|\dot \varphi_1(t)|}{\varphi_1^2(t)} +  c_1 + (1+c_2) |y_2(t)| + |\Delta_1(t)| + |e_2(t)|\\
& \textcolor{black}{ + \left( {1 + 2{\varpi _0^2}{{(t)}}{k_0}\left( t \right)} \right){k_0}(t){{\dot \varpi }_0}(t) \le C_1}.
\end{aligned}
\end{equation}
Invoking $|{{\varpi _1}\left( t \right)}|<1$ we thus obtain
\begin{equation}
\begin{aligned}
{{\dot U}_1}(t) &\le {\varphi _1}(t)|{\varpi _1}(t)|\left( { - {k_1}(t)|{\varpi _1}(t)| + C_1} \right) \textcolor{black}{\le  0}
\end{aligned}
\end{equation}
when we choose $\varepsilon_1 \in (0,1)$ large enough so that $\frac{\varepsilon_1}{1-\varepsilon_1^2} \ge C_1$.
Upon integration over $[t_0^*, t_1^*]$ we find that
\begin{equation}%\label{||e1(t1)-e1(t0)||_1}
\begin{aligned}
\varepsilon_1^2 = \omega_1(t_0^*)^2 \ge  \omega_1(t_1^*)^2 > \varepsilon_1^2,
\end{aligned}
\end{equation}
a contradiction. Hence  $|\varpi_1(t)| \le \varepsilon_1$ for all $t\in [0,t_f)$, and thus $k_1$ is bounded on $[0,t_f)$.

\textbf{\textit{Step 4:}} We prove that $k_2$ is bounded  on $[0,t_f)$. With ${U_2}(t) = \frac{1}{2}\varpi _2^2\left( t \right)$ it follows from \eqref{de2} that
\begin{equation}
\begin{aligned}
&{{\dot U}_2}(t) = {\varpi _2}\left( t \right){{\dot \varpi }_2}\left( t \right)\\
&= {\varpi _2}\left( t \right)\left( {{{\dot \varphi }_2}\left( t \right){e_2}\left( t \right) + {\varphi _2}\left( t \right){{\dot e}_2}\left( t \right)} \right)\\
&= \frac{{{{\dot \varphi }_2}(t)}}{{{\varphi _2}(t)}}{\varpi _2^2}{(t)} - {\varphi _2}(t){k_2}(t){\varpi _2^2}{(t)}\\
& \;\;\;\;+ {\varphi _2}(t){\varpi _2}(t)\left( {{c_1} + {c_2}{y_2}(t) + {\Delta _2}(t) - {\Delta _1}(t) + {e_3}(t)} \right)\\
&\textcolor{black}{\;\;\;\;+ {\varphi _2}(t){\varpi _2}(t)\left( {1 + 2{\varpi _1^2}{{(t)}}{k_1}\left( t \right)} \right){k_1}(t){{\dot \varpi }_1}(t)}.
\end{aligned}
\end{equation}
By Step 3 it follows that $k_1$ is bounded on $t\in [0,t_f)$. Furthermore, $|e_3(t)|<1/\varphi_3(t)$ and $\varpi_1$, $\dot \varpi_1$, $y_2, \Delta_1, \Delta_2, \varphi_2, \dot \varphi_2$ are  bounded on $[0,t_f)$, hence there exists a constant $C_2>0$ satisfying
\textcolor{black}{
\begin{equation}
\begin{aligned}
&\frac{{|{{\dot \varphi }_2}(t)|}}{{{\varphi _2^2}(t)}} + {c_1} + {c_2}|{y_2}(t)| + |{\Delta _2}(t)| + |{\Delta _1}(t)| + |{e_3}(t)| \\
&\textcolor{black}{+ \left( {1 + 2{\varpi _1^2}{{(t)}}{k_1}\left( t \right)} \right){k_1}(t)|{{\dot \varpi }_1}(t)| \le {C_2}}.
\end{aligned}
\end{equation}
Invoking $|{{\varpi _2}\left( t \right)}|<1$ we thus obtain
\begin{equation}
\begin{aligned}
{{\dot U}_2}(t) &\le {\varphi _2}(t)|{\varpi _2}(t)|\left( { - {k_2}(t)|{\varpi _2}(t)| + C_2} \right)
\end{aligned}
\end{equation}
and with a similar argument as in Step 3 it can be shown that $k_2$ is bounded on ${[0,t_f)} $.}

\textbf{\textit{Step 5:}} We show that $k_3$ is bounded  on $[0,t_f)$. To this end, we substitute \eqref{novel funnel controller} into \eqref{de3}, yielding
\begin{equation}
\begin{aligned}
{{\dot e}_3}\left( t \right) &= - {c_5}{k_3}(t)\frac{{{\varpi _3}(t)}}{{{\textcolor{black}{\varphi _3}\left( t \right)}}} - z_{{h_{ref}}}^{\left( 4 \right)}\left( t \right) + \frac{d}{{dt}}\left( {{k_2}\left( t \right){\varpi _2}\left( t \right)} \right)\\
&\;\;\;\;+ {c_3} + {c_4}{y_2}\left( t \right) + {\Delta _3}(t)
\end{aligned}
\end{equation}
Define ${U_3}(t) = \frac{1}{2}\varpi _3^2\left( t \right)$ and calculate
\begin{equation}
\begin{aligned}
&{{\dot U}_3}(t) = {\varpi _3}\left( t \right){{\dot \varpi }_3}\left( t \right)\\
&= {\varpi _3}\left( t \right)\left( {{{\dot \varphi }_3}\left( t \right){e_3}\left( t \right) + {\varphi _3}\left( t \right){{\dot e}_3}\left( t \right)} \right)\\
&=  - {c_5}{k_3}(t)\textcolor{black}{\varpi _3^2(t)} + \frac{{{{\dot \varphi }_3}(t)}}{{{\varphi _3}(t)}}\varpi _3^2(t)\\
&\;\;\;\;+ {\varpi _3}\left( t \right){\varphi _3}\left( t \right)\left( { - z_{{h_{ref}}}^{\left( 4 \right)}\left( t \right) + {c_3} + {c_4}{y_2}\left( t \right) + {\Delta _3}(t)} \right)\\
&\textcolor{black}{\;\;\;\;+ {\varpi _3}\left( t \right){\varphi _3}\left( t \right)\left( {1 + 2\varpi _2^2(t){k_2}\left( t \right)} \right){k_2}(t){{\dot \varpi }_2}(t)}.
\end{aligned}
\end{equation}
From Step 4 we find that $k_2$ is bounded on $[0,t_f)$. Because of $|e_3(t)|<1/\varphi_3(t)$ and boundedness of $\varpi_2$, $\dot \varpi_2$, $y_2, \Delta_3, \varphi_3, \dot \varphi_3,\textcolor{black}{z_{{h_{ref}}}^{\left( 4 \right)}}$ on $[0,t_f)$ it follows that there exists $C_3>0$ such that
\begin{equation}
\begin{aligned}
&\textcolor{black}{{{\varphi _3}(t)}\Big(\frac{{|{{\dot \varphi }_3}(t)|}}{{{\varphi _3^2}(t)}} + \left|z_{{h_{ref}}}^{\left( 4 \right)}\left( t \right)\right| + {c_3} + {c_4}|{y_2}\left( t \right)|}  \\
&\textcolor{black}{+ |{\Delta _3}| + \left( {1 + 2\varpi _2^2(t){k_2}\left( t \right)} \right){k_2}(t)|{{\dot \varpi }_2}(t)|\Big)\le C_3}.
\end{aligned}
\end{equation}
Invoking $|{{\varpi _3}\left( t \right)}|<1$ we thus obtain
\begin{equation}
\begin{aligned}
\textcolor{black}{{{\dot U}_3}(t) \le |{\varpi _3}(t)|\left( { - {c_5}{{{k_3}(t)|{\varpi _3}(t)|}} + {C_3}} \right)}
\end{aligned}
\end{equation}
and with a similar argument as in Step 3 it can be shown that $k_3$ is bounded on ${[0,t_f)} $.
	
\textbf{\textit{Step 6:}} \textcolor{black}{We show that $t_f=\infty$.} \textcolor{black}{Assuming $t_f<\infty$ it follows from Steps 2--5 that the closure of the graph of $(y_0,\ldots,y_3)$ is a compact subset of $D_4$, which contradicts the findings of Step~1. Therefore, $t_f=\infty$.}
\end{proof}

Note that the work \cite{XiyuGu 2022 ISA} also employs funnel control techniques to solve the tracking problem with prescribed transient behavior for RVs. However, the controller there requires additional design parameters which need to be sufficiently large, but it is not known a priori how large they must be chosen. In the present paper, we avoid this problem by introducing a novel error variable form as in~\eqref{e_i}. This seems advantageous for practical engineering.

\section{\textcolor{black}{SIMULATION}\label{section simulation}}
%\subsection{Tracking a Dubins Trajectory}\label{Dubins Trajectory}

In this section, we illustrate the performance of the funnel controller~\eqref{novel funnel controller} by considering the  lateral action of a RV with constant speed $V=5Ma$ at a height of \textcolor{black}{20 km}.
The initial states of the RV and the values of the geometric system parameters are shown in \textcolor{black}{Table \ref{initial value}}. As disturbances we choose ${\Delta _0}(t) = \frac{1}{{57.3}}5\sin \left( {\frac{\pi }{4}t} \right)$, ${\Delta _1}(t) = \frac{1}{{57.3}}0.2\sin \left( {\frac{\pi }{4}t} \right)$, ${\Delta _2}(t) = \frac{1}{{57.3}}2\sin \left( {\frac{\pi }{4}t} \right)$, ${\Delta _3}(t) = \frac{1}{{57.3}}10\sin \left( {\frac{\pi }{4}t} \right)$. The simulation was performed in MATLAB (solver: \textsc{ode45}, default tolerances).

In practical engineering applications, input constraints are always present. Therefore, although such constraints are not considered in the theoretical treatment in Theorem~\ref{theorem}, we incorporated them in the simulation such that the actual control input is $\sat(u(t))$, where $\sat(v) = v$ for $|v|\le 40$ and $\sat(v) = \sgn(v) 40$ for $|v|>40$.

As for the maneuvering reference trajectory, an extensively used Dubins trajectory is selected as the planning path, shown in \textcolor{black}{Fig. \ref{fig_dubin_trajectory}}, where $\theta_j$ $(j=BC,DE,FG)$ and $R_j$ are the turning radius and central angles, respectively. The coordinates of the starting point, turning points and end point are $A\left( {0,450} \right)$, $B\left( {12000,50} \right)$, $C\left( {15000,0} \right)$, $D\left( {24000,0} \right)$, $E\left( {27000,50} \right)$, $F\left( {30000,150} \right)$, $G\left( {35000,260} \right)$ and \textcolor{black}{$H\left( {48530,-200} \right)$}.
The time consumed per period is calculated by ${t_i} = \frac{{{{\bar l}_i}}}{V}$ $\left( {i = AB,CD,EF,GH} \right)$ through straight regions and ${t_j} = \frac{{{{\mathord{\buildrel{\lower3pt\hbox{$\scriptscriptstyle\frown$}} \over l} }_j}}}{V} = \frac{{{\theta _j}{R_j}}}{V}$ $\left( {j = BC,DE,FG} \right)$ in turning areas.
Since the proposed performance funnel is time-triggered based on the planning path, the triggered times are set in accordance with the time points of the reference trajectory, resulting in $p=3$.  Thus the parameters of the funnel boundary function of the form~\eqref{novel funnel} are chosen as in \textcolor{black}{Table~\ref{tabel funnel}}.

\begin{figure}
	%		\begin{minipage}{8.0cm}
	\centering
	\includegraphics[width =1\columnwidth]{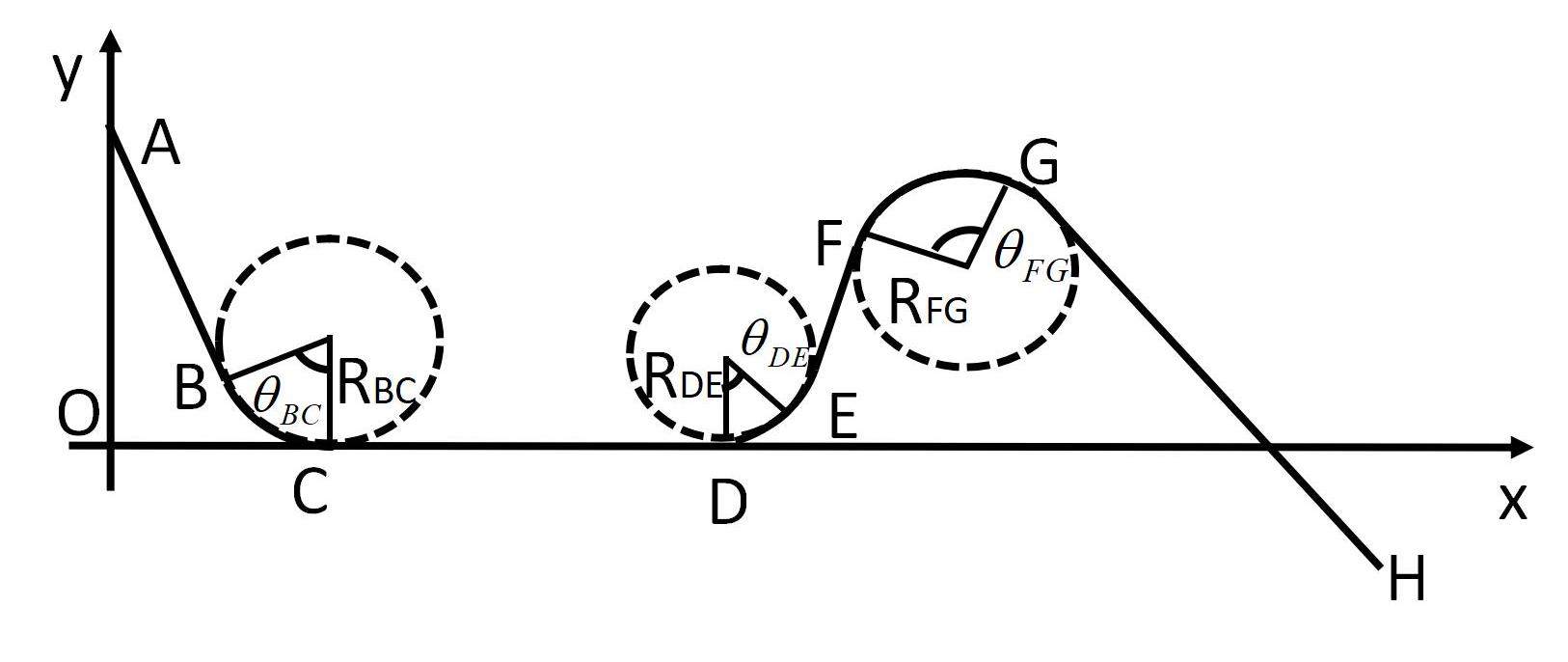}
	\caption{\textcolor{black}{Planning trajectory for RV}}
	\label{fig_dubin_trajectory}
	%	\end{minipage}
\end{figure}

The simulation is depicted in Figs.~\ref{fig_e0}--\ref{fig_/zh}.  The tracking error $e_0$ and the auxiliary errors $e_1, e_2, e_3$ are shown in Figs.~\ref{fig_e0}--\ref{fig_e3}. It is found that $e_3$ is more sensitive to maneuvering than $e_0$, where a slight jump appears near each trigger time. Every error $e_i$ $(i=0,1,2,3)$ is kept within its respective funnel through the whole process and under the influence of disturbances. The control input, namely the rudder angle, is shown in Fig.~\ref{fig_deltay}. It can be seen that the control input reaches saturation due to the input constraints. Although the saturation is not covered by Theorem~\ref{theorem}, the controller obviously exhibits an exceptional performance under input constraints. Further research is necessary to find theoretical guarantees for this performance.

Fig.~\ref{fig_/psi_psiv_beta_wy} shows the deflection angle, sideslip angle, yaw angle and  yaw rate to illustrate that all variables in the closed-loop system are bounded. The tracking maneuver reference trajectory is shown in Fig.~\ref{fig_/zh} together with the output signal generated under control. Although the performance of the vehicle in the continuous large maneuver segment is not as good as that in the single maneuver segment, we observe a decent tracking performance overall.

However, we like to note that in this example the theoretical condition~\eqref{inequality-step 1} from Theorem~\ref{theorem} is not satisfied, yet the controller still works. This shows that the assumptions of Theorem~\ref{theorem} are quite conservative and further research is necessary to relax them. A thorough inspection of the proof of Theorem~\ref{theorem} reveals that the conservativeness of condition~\eqref{inequality-step 1} is due to the utilization of the mean value theorem and avoiding it could lead to a weaker condition.
%Note that the use of a saturation function to roughly represent the amplitude limitation on actuator in simulation is not rigorous in theory. While it may effective in practical engineering. Input control constraint involved in funnel control algorithm of RV will be studied on theoretical aspect in the future.

\begin{figure}
	%		\begin{minipage}{8.0cm}
	\centering
	\includegraphics[width =1\columnwidth]{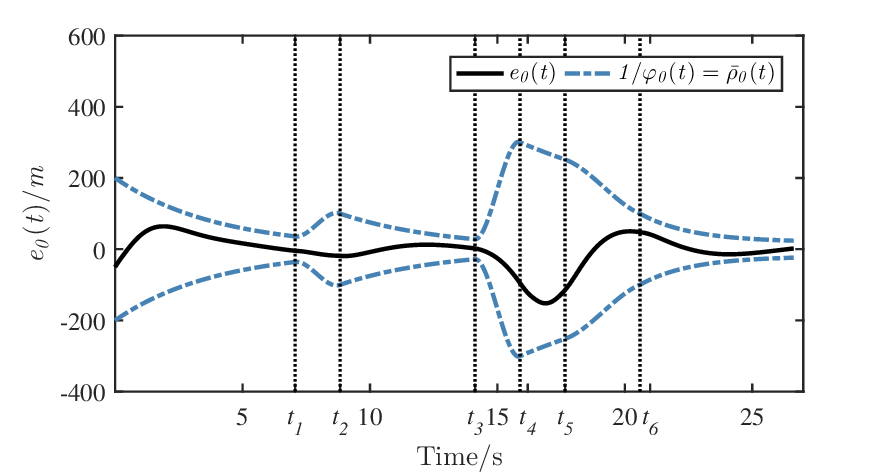}
	\caption{\textcolor{black}{Response of tracking error $e_0(t)$.}}
	\label{fig_e0}
	%	\end{minipage}
\end{figure}
\begin{figure}
	%		\begin{minipage}{8.0cm}
	\centering
	\includegraphics[width =1\columnwidth]{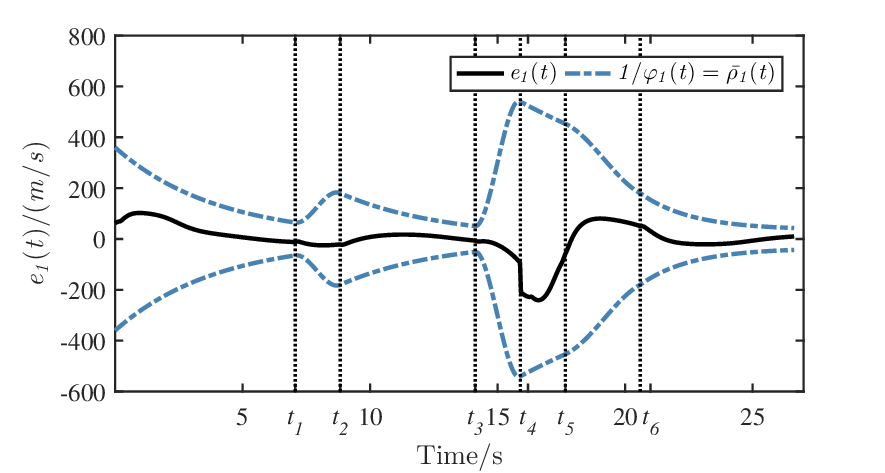}
	\caption{\textcolor{black}{Response of error $e_1(t)$.}}
	\label{fig_e1}
	%	\end{minipage}
\end{figure}
\begin{figure}
	%		\begin{minipage}{8.0cm}
	\centering
	\includegraphics[width =1\columnwidth]{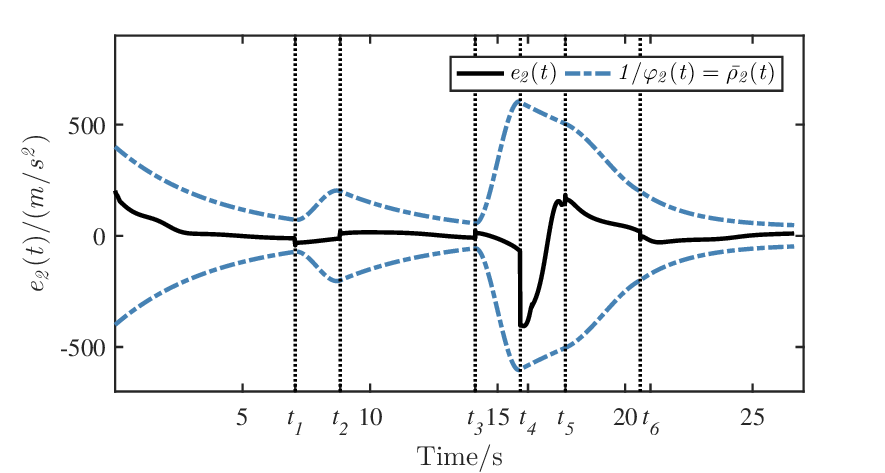}
	\caption{\textcolor{black}{Response of error $e_2(t)$.}}
	\label{fig_e2}
	%	\end{minipage}
\end{figure}
\begin{figure}
	%		\begin{minipage}{8.0cm}
	\centering
	\includegraphics[width =1\columnwidth]{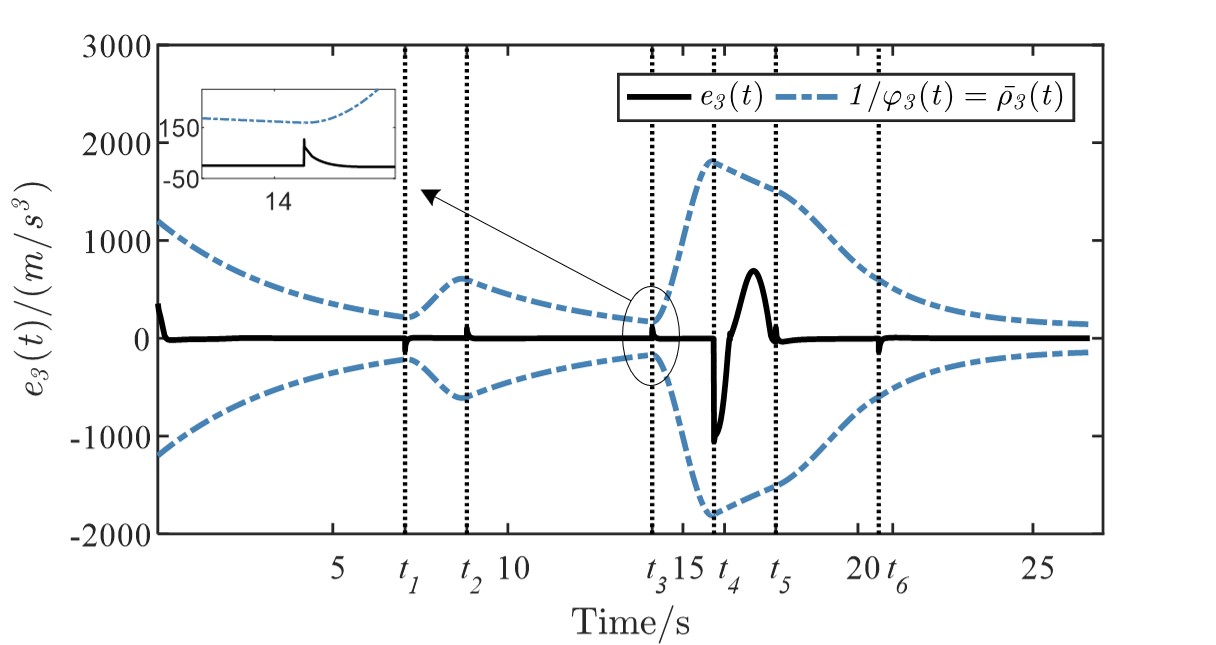}
	\caption{\textcolor{black}{Response of error $e_3(t)$.}}
	\label{fig_e3}
	%	\end{minipage}
\end{figure}

\begin{figure}
	%		\begin{minipage}{8.0cm}
	\centering
	\includegraphics[width =1\columnwidth]{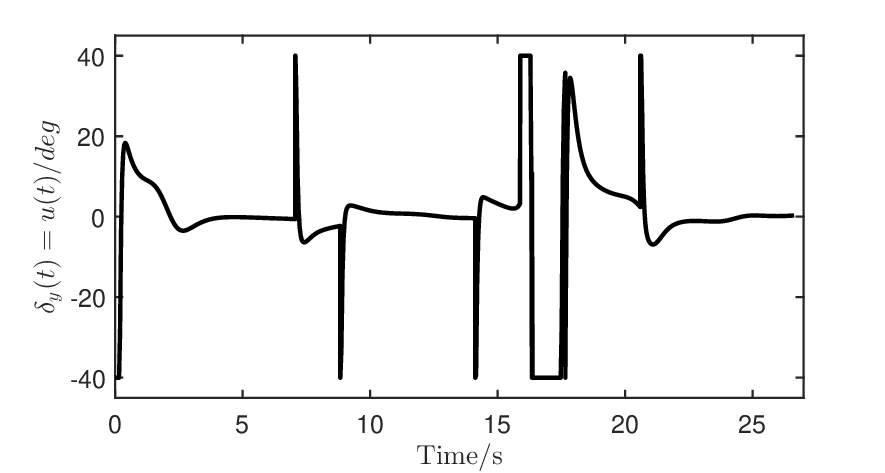}
	\caption{\textcolor{black}{Response of control input $\delta_y(t)=u(t)$.}}
	\label{fig_deltay}
	%	\end{minipage}
\end{figure}

\begin{figure}
	%		\begin{minipage}{8.0cm}
	\centering
	\includegraphics[width =1\columnwidth]{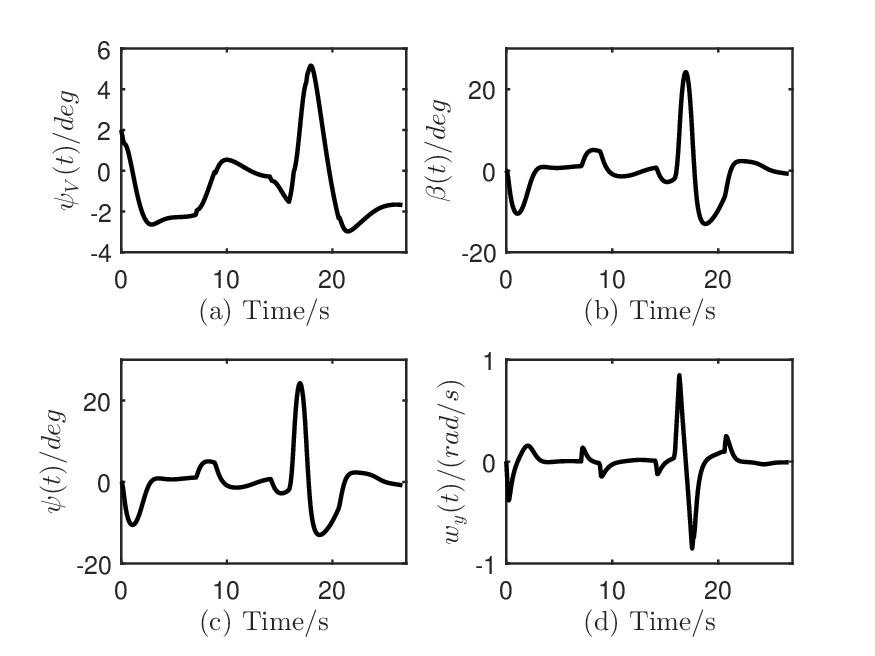}
	\caption{\textcolor{black}{Response of system: (a) $\psi_V(t)$, (b) $\beta(t)$, (c) $\psi(t)$, (d) $\omega_y(t)$.}}
	\label{fig_/psi_psiv_beta_wy}
	%	\end{minipage}
\end{figure}

\begin{figure}
	%		\begin{minipage}{8.0cm}
		\centering
		\includegraphics[width =1\columnwidth]{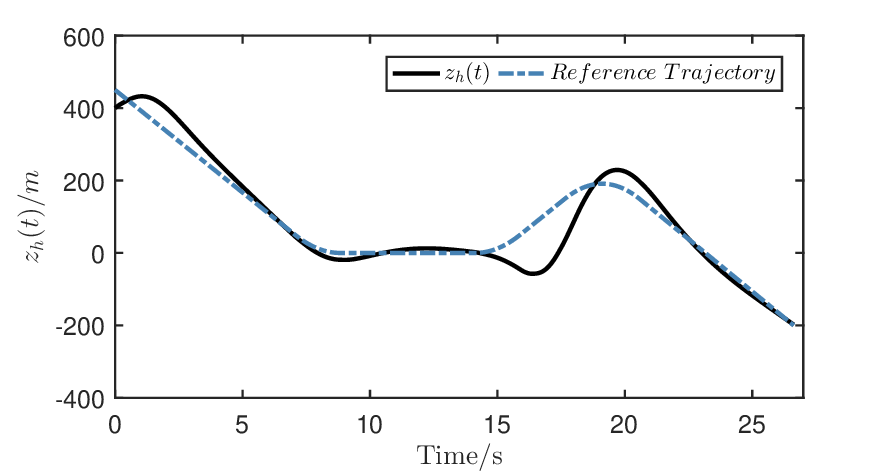}
		\caption{\textcolor{black}{Tracking response of flight altitude $z_h$.}}
		\label{fig_/zh}
		%	\end{minipage}
\end{figure}

\begin{table}
	\centering
	\caption{\textcolor{black}{Geometric parameters and initial state of RV}}
	\label{initial value}
	\begin{tabular}{m{1.5cm}m{1.35cm}m{1.6cm}m{1.4cm}}
		\hline
		Variables&Value&Variables&Value\\
		\hline
		m (kg)&1200&$z_{h}(0)$ (m)&400\\
		S ($\rm{m^2}$)&1.3&$\psi_V(0)$ (rad) &2/57.3\\
		$l$ (m)&1.7&$\psi(0)$ (rad) &4/57.3\\
		$J_y$ $({\rm{kg}}\cdot{{\rm{m}}^2})$&8110&$\omega_y(0)$ (rad/s) &0.035\\
		$\alpha$ (rad)& 5/57.3&$\beta(0)$&2/57.3\\
        $c_z^\alpha$&0&$c_z^\beta$&0.1852\\
		$c_z^0$&-0.018714&$c_M^\alpha$&-0.1\\
		$c_M^\beta$&2.1335&$c_M^{\delta_y}$&5.1588\\
		$c_M^0$&0.18979&$\bar q$&3711.93329\\
		\hline
	\end{tabular}
\end{table}

\begin{table}
	\centering
	\caption{\textcolor{black}{Parameters of the proposed funnel boundary }}
	\label{tabel funnel}
	\begin{tabular}{m{2cm}m{4.8cm}}
		\hline
		Variables& ~~~~~~~~~~~~~~~~~~Value\\
		\hline
		$\bar \rho_0(t)$&$
		\bar \rho _{0,1}^0 = {\rm{200}}$, $\bar \rho _{0,2}^0 = 100$, $\bar \rho _{0,3}^0 = 300$, $\bar \rho _{0,4}^0 = 100$, $\bar \rho _{0,1}^\infty  = 2$, $\bar \rho _{0,2}^\infty  = 2$, $\bar \rho _{0,3}^\infty  = 2$, $\bar \rho _{0,4}^\infty  = 10$, ${l_{0,1}} = 0.25$, ${l_{0,2}} = 0.25$, ${l_{0,3}} = 0.5,{l_{0,4}} = 0.1$\\
		\hline
		$\bar \rho_i(t)$  $(i=1,2,3)$&${{\bar \rho }_{1}}\left( t \right) = 1.8{{\bar \rho }_{0}}\left( t \right)$, ${{\bar \rho }_{2}}\left( t \right) = 2{{\bar \rho }_{0}}\left( t \right)$, ${{\bar \rho }_{3}}\left( t \right) = 6{{\bar \rho }_{0}}\left( t \right)$\\
		\hline
	\end{tabular}
\end{table}

\section{\textcolor{black}{CONCLUSION}}\label{section conclusion}
In this paper, we proposed a novel funnel-based tracking control algorithm for guaranteeing prescribed performance of the tracking error in reentry vehicle maneuvering flight.
A time triggered non-monotonic funnel boundary is designed to successfully improve the controller performance, which is verified by simulations.
The results show that the proposed control method has the ability to stabilize the closed-loop system under arbitrary bounded disturbances.
Future research will focus on the relaxation of the conditions of Theorem~\ref{theorem} as well as their extension to the presence of input constraints. To this end, the recent results in~\cite{Berg22} might be a starting point.

\end{document}